\def\figurename{Figure} % Replace the colon that normally appears after the Figure number by a period.
\renewcommand{\fnum@figure}[1]{\figurename~\thefigure.}
\def\tablename{Table} % Replace the colon that normally appears after the Figure number by a period.
\renewcommand{\fnum@table}[1]{\tablename~\thetable.}
\newtheorem{theorem}{Theorem}[section]
\newtheorem{lemma}[theorem]{Lemma}
\theoremstyle{definition}
\newtheorem{definition}[theorem]{Definition}
\theoremstyle{remark}
\numberwithin{equation}{section}
\def\E{\mathbb E}
\def\E{\mathbb E}
\begin{document}

%------------------------------------------------------------------------------
\title{Backward doubly stochastic integral equations of the Volterra
type}

\author{Jean-Marc OWO \vspace{0.2cm}\\
Universit\'{e} de Cocody\\ UFR de Math\'{e}matiques et Informatique
\\ $22$ BP $582$ Abidjan $22$, C\^{o}te d'Ivoire}%

%\address{
%Universit\'{e} de Cocody-Abidjan, UFR de Math\'{e}matiques et
%Informatique, \newline \indent Laboratoire de Probabilités et
%Statistique: 22 BP 582 Abidjan 22, C\^{o}te d'Ivoire}

%\email{owojmarc@hotmail.com}

\date{}
\maketitle \thispagestyle{empty} \setcounter{page}{1}

% ------- [First Page Running Head] - place it immediately after title! ------
\thispagestyle{fancy} \fancyhead{}
 \fancyfoot{}
\renewcommand{\headrulewidth}{0pt}
%------------------------------------

\begin{abstract}
%EndExpansion
In this paper, we study backward doubly stochastic integral
equations of the Volterra type ( BDSIEVs in short). Under uniform
Lipschitz assumptions, We establish an existence and uniqueness
result.
%TCIMACRO{\TeXButton{End abstract}{\end{abstract}}}%
%BeginExpansion
\end{abstract}%
%EndExpansion

%TCIMACRO{\TeXButton{Begin keywords}{\begin{keyword}} }%
%BeginExpansion

\vspace{.08in} \noindent \textbf{Keywords.} Volterra integrals,
backward stochastic integral, backward doubly stochastic Volterra
integral equations.

\vspace{.08in} \noindent \textbf{Mathematics Subject Classification
(2000).} 60H05, 60H20, 60H35

\section{Introduction}
Backward doubly stochastic differential equations (BDSDEs for short)
are equations with two different directions of stochastic integrals,
i.e., the equations involve both a standard (forward) stochastic
integral $dW_{t}$ and a backward stochastic integral
$\overleftarrow{dB_t}$:
\begin{eqnarray}\label{v}
Y(t)=\xi+\int_{t}^{T}f(s,Y(s),Z(s))ds+
\int_{t}^{T}g(s,Y(s),Z(s))\overleftarrow{dB_s}-\int_{t}^{T}Z(s)dW_{s}.
\end{eqnarray}
This kind of equation was introduced by Pardoux and Peng \cite{Pard.
Peng} in $1994$. They proved the existence and uniqueness of
solutions for BDSDEs under uniform Lipschitz conditions. Many others
investigations concerned BDSDEs were made with weaker conditions
namely by Zhou and al. \cite{Z.al} in $2004$ with non-Lipschitz
assumptions witch in turn were weakened by Han Baoyan and al.
\cite{Han} in $2005$ and recently by N'zi and Owo \cite{owo} (2008).
In \cite{Shi} ($2005$), Shi and al. weaken the uniform Lipschitz
assumptions to linear growth and continuous conditions by virtue of
the comparison theorem that is introduced by themselves. They obtain
the existence of solutions to BDSDE but without uniqueness. Pursuing
their investigations on BDSDEs, N'zi and Owo \cite{owo1} (2009)
obtained recently an existence result with discontinuous conditions.
Meanwhile, an other line of researches concerned with backward
stochastic integral equations of Volterra type (BSIEVs for short)
i.e., equations in form:
\begin{eqnarray}\label{v1}
Y(t)=\xi+\int_{t}^{T}f(t,s,Y(s),Z(t,s))ds- \int_{t}^{T}Z(t,s)dW_{s},
\end{eqnarray}
are lead by Lin \cite{lin} in $(2002)$ under global Lipschitz
condition on the drift witch was recently weakened by A. Aman and
M. N'zi \cite{Aman} in ($2005$) to local Lipschitz condition.\\
Recently, general case of BSIEVs \eqref{v1}, has been studied by J.
Yong [\cite{Yong},\cite{Yong1}] ($2006$).

\medskip
The purpose of this paper is to generalize the theory of Volterra
equations to backward doubly stochastic integral equations.
\\
Thus, we consider the following equation:
\begin{eqnarray}\label{v2}
Y(t)=\xi+\int_{t}^{T}f(t,s,Y(s),Z(t,s))ds+
\int_{t}^{T}g(t,s,Y(s),Z(t,s))\overleftarrow{dB_s}
-\int_{t}^{T}Z(t,s)dW_{s},
\end{eqnarray}
that we call backward doubly stochastic Volterra integral equations
(in short BDSVIEs).

\medskip
Note that when  $f$ and $g$ do not depend on $t$, backward doubly
stochastic Volterra integral equations (BDSVIEs) coincide with
backward doubly stochastic differential equations (BDSDEs) .

\medskip
\noindent The present paper is organized as follows : in section
$2$, we deal with notations and set up our framework assumptions and
give the definition of adapted solutions to BDSVIEs. The section $3$
is concerned with the main result.
\section{Preliminaries}
\subsection{Notations}
The Euclidean norm of a vector $x \in \mathbb{R}^{k}$ will be denote
by  $|x|$, and for an element $z\in \mathbb{R}^{d \times k}$
considered as a $d \times k$ matrix, we define its Euclidean norm by
$||y ||=\sqrt{Tr(zz^{*})}$ and $
 <z,y>=Tr(zy^{*}),$ where $y^{*}$ is
the transpose of $y$.\\
Let $(\Omega, \mathcal{F},\mathbb{P})$ be a probability space and
$T$ be a fixed final time.  Throughout this paper $\{W_{t}; 0\leq
t\leq T \}$ and $\{B_{t}; 0\leq t\leq T \}$ will denote two mutually
independent standard Brownian motion processes, with values
$\mathbb{R}^{d}$ and $\mathbb{R}^{l}$, respectively, defined on
$(\Omega, \mathcal{F},\mathbb{P})$.\\ Let $\mathcal{N}$ denote the
class of $P$-null sets of $\mathcal{F}$. For each $(t,s) \in
[0,T]^2$, we define
$$\mathcal{F}_{t,s}=\mathcal{F}_{t}^{W} \vee \mathcal{F}_{s,T}^{B},\ \ \mathcal{F}_{t}=\mathcal{F}_{t,t} \ \text{and}\ \
\mathbb{F} =\{\mathcal{F}_{t}\}_{t\geq 0},$$ where for any process
$\{x_{t}\}$ ; $\mathcal{F}_{u,t}^{x}=\sigma \{x_{r}-x_{u}; u\leq r
\leq t \} \vee \mathcal{N}$,
$\mathcal{F}_{t}^{x}=\mathcal{F}_{0,t}^{x}$.

\medskip
\noindent Also, we set $\mathbb{F}_{\cdot
s}=\{\mathcal{F}_{t,s}\}_{t\geq 0}$ and
$\mathbb{F}_{t\cdot}=\{\mathcal{F}_{t,s}\}_{s\geq 0}$

\medskip
For $S\in [0,T]$, set $\mathcal{D}_{S,T}=[S,T]^2$;
$\mathcal{D}=\mathcal{D}_{0,T}$ and denote by $\mathcal{P}$ the
$\sigma$-algebra of $\mathcal{F}_{T}$-measurable subsets of $\Omega
\times \mathcal{D}$.

\noindent For any $n \in \mathbb{N}$, let
$\mathcal{M}^{2}(S,T,\mathbb{R}^{n})$ denote the set of (class of
$dP\otimes dt$ a.e. equal) $n-$dimensional jointly measurable random
processes $\varphi:\Omega \times [S,T]\rightarrow \mathbb{R}^{n}$
which satisfy:

\begin{enumerate}
\item[(i)] $|| \varphi ||_{\mathcal{M}^{2}(S,T)}^{2}=\displaystyle\mathbb{E}%
\left(\int_{S}^{T}\mid \varphi(t) \mid^{2} dt\right)< \infty$

\item[(ii)] $\varphi(t)$ is $\mathcal{F}_{t}-$measurable, for a.e. $t \in
[S,T].$
\end{enumerate}

\noindent Similarly, we denote by
$\mathcal{M}^{2}(\mathcal{D}_{S,T},\mathbb{R}^{n})$ the set of
(class of $dP\otimes ds\otimes dt$ a.e. equal) $n-$dimensional
jointly measurable random processes $\psi:\Omega \times
\mathcal{D}_{S,T}\rightarrow \mathbb{R}^{n}$ satisfying:

\begin{enumerate}
\item[(i)] $|| \psi ||_{\mathcal{M}^{2}(\mathcal{D}_{S,T})}^{2}=\displaystyle\mathbb{E}%
\left(\int_{S}^{T}\int_{S}^{T}| \psi(t,s) |^{2} dsdt\right)< \infty$

\item[(ii)] $\psi(t,s)$ is $\mathcal{F}_{s}-$measurable, for a.e. $s \in
[S,T],$ and any $t \in [S,T].$
\end{enumerate}

\noindent Finally, we set
$$\mathcal{H}^{2}(\mathcal{D}_{S,T})=\mathcal{M}^{2}(S,T,\mathbb{R}^{k})\times
\mathcal{M}^{2}(\mathcal{D}_{S,T},\mathbb{R}^{k \times d}),$$ with
the norm
$$||(y(.),z(.,.))||_{\mathcal{H}^{2}(\mathcal{D}_{S,T})}^{2}=\mathbb{E}\left\{
\int_{S}^{T}|y(t)|^2dt+
\int_{S}^{T}\int_{S}^{T}|z(t,s)|^2dsdt\right\}< \infty.$$
Furthermore, let $\mathrm{L}^{2}(\Omega, \mathcal{F}_{T},
\mathbb{P}, \mathbb{R}^k)$ be the set of $k$-dimensional
$\mathcal{F}_{T}$-measurable random vector $\xi$ such that
$$\E\left(|\xi|^2\right)<\infty.$$ We will denote by
$\mathcal{B}_{k}$ the Borel $\sigma$-algebra of $\mathbb{R}^{k}$.
\subsection{Assumptions and definition}
Let $$f:\Omega \times \mathcal{D}\times \mathbb{R}^{k} \times
\mathbb{R}^{k \times d}\rightarrow \mathbb{R}^{k}\ \ \text{and}\ \
g:\Omega \times \mathcal{D}\times \mathbb{R}^{k}\times \mathbb{R}^{k
\times d} \rightarrow \mathbb{R}^{k \times l}$$ be
$(\mathcal{P}\otimes \mathcal{B}_{k}\otimes \mathcal{B}_{k \times
d}\ / \ \mathcal{B}_{k})$ resp. $(\mathcal{P}\otimes
\mathcal{B}_{k}\otimes \mathcal{B}_{k \times d}\ / \ \mathcal{B}_{k
\times l})$ -measurable functions such that for any $(y,z)\in
\mathbb{R}^{k}\times \mathbb{R}^{k \times d},$

\noindent$(H_{1})$
$$ f(.,.,y,z) \in
\mathcal{M}^{2}(\mathcal{D},\mathbb{R}^{k})\ \text{and}\ g(.,.,y,z)
\in \mathcal{M}^{2}(\mathcal{D},\mathbb{R}^{k\times l}).$$ We assume
moreover that there exist constants $C>0$ and $0<\alpha<1$ such that
for any $(\omega, (t,s)) \in \Omega \times \mathcal{D}$ and any
$(y_{1},z_{1}), (y_{2}, z_{2}) \in \mathbb{R}^{k} \times
\mathbb{R}^{k \times d},$

\noindent$(H_{2})$
\begin{eqnarray*}
\mid f(\omega,t,s,y_{1},z_{1})-f(\omega,t,s,y_{2},z_{2}) \mid^{2}
\leq C(\mid y_{1}-y_{2}\mid^{2}+\mid \mid z_{1}-z_{2}\mid \mid^{2})
\end{eqnarray*}
\begin{eqnarray*}
\mid g(\omega,t,s,y_{1},z_{1})-g(\omega,t,s,y_{2},z_{2}) \mid^{2}
\leq C\mid y_{1}-y_{2}\mid^{2}+\alpha\mid \mid z_{1}-z_{2}\mid
\mid^{2}
\end{eqnarray*}
Furthermore, let

\noindent$(H_{3})$ $$\xi \in \mathrm{L}^{2}(\Omega, \mathcal{F}_{T},
\mathbb{P}, \mathbb{R}^k).\hspace{4cm}$$

\medskip
Now, we consider the following BDSVIE: $0\leq t \leq T$

\begin{eqnarray}\label{v2a}
Y(t)=\xi+\int_{t}^{T}f(t,s,Y(s),Z(t,s))ds+
\int_{t}^{T}g(t,s,Y(s),Z(t,s))\overleftarrow{dB_s}
-\int_{t}^{T}Z(t,s)dW_{s}.
\end{eqnarray}
\begin{definition}
A pair of processes $(Y(.),Z(.,.))$ where $Y:\Omega
\times[0,T]\rightarrow \mathbb{R}^{k}$ and $Z:\Omega \times
\mathcal{D} \rightarrow \mathbb{R}^{k \times d}$ is called adapted
solution of \eqref{v2a} if $(Y(.),Z(.,.))\in
\mathcal{H}^{2}(\mathcal{D})$ and satisfies \eqref{v2a}.
\end{definition}

\section{Existence and uniqueness of the adapted solution to BDSVIE}
\noindent To reach the main result, we consider first the equation
\eqref{v2a} where $f$ and $g$ do not depend on $y$ and $z$. That is
\begin{eqnarray}\label{eq21}
Y(t)=\xi+\int_{t}^{T}f(t,s)ds+
\int_{t}^{T}g(t,s)\overleftarrow{dB_s}-\int_{t}^{T}Z(t,s)dW_{s},\
t\in[0,T]
\end{eqnarray}
\begin{lemma}\label{l1}
Let $(H_{1})$, $(H_{2})$ and $(H_{3})$ hold. Then, BDSVIE
\eqref{eq21} admits a unique adapted solution$(Y(.),Z(.,.))\in
\mathcal{H}^{2}(\mathcal{D})$.

\medskip
\noindent
% with the following
%relation:
%\begin{eqnarray*}\label{eq22}
%Y(t)=\E\left(\xi+\int_{t}^{T}f(t,s)ds+
%\int_{t}^{T}g(t,s)\overleftarrow{dB_s}\ |\ %\mathcal{F}_{t,T}^{B}\right)+\int_{0}^{t}Z(t,s)dW_{s},\ t\in[0,T]
%\end{eqnarray*}
Moreover, the following estimate holds:
\begin{eqnarray}\label{eq23}
&&\mathbb{E}\int_{S}^{T}|Y(t)|^2dt+
\mathbb{E}\int_{S}^{T}\int_{S}^{T}|Z(t,s)|^2dsdt
\\
&\leq& 9(T-S)\mathbb{E}|\xi|^2+9\big[(T-S)\vee
1\big]\mathbb{E}\int_{S}^{T}\int_{S}^{T}\Big(|f(t,s)|^2+|g(t,s)|^2\Big)dsdt,
\nonumber
\end{eqnarray}for any $S\in [0,T]$.
\end{lemma}
\begin{proof}
For any $t\in[0,T]$, consider the process $M(t,.)$ defined by: $r\in
[0,T]$, $$M(t,r)=\E\left(\xi+\int_{0}^{T}f(t,s)ds+
\int_{0}^{T}g(t,s)\overleftarrow{dB_s}\ |\
\mathcal{F}_{r,0}\right).$$ The process  $M(t,.)$ as defined, is
$\mathbb{F}_{\cdot 0}-$square integrable martingale. Therefore, by
the extension of Itô's martingale representation theorem, there
exists a $\mathcal{F}_{r,0}-$progressively measurable process
$Z(t,.)$ with valued in $\mathbb{R}^{k \times d}$ such that
$$\int_{0}^{T}|Z(t,s)|^2ds<\infty$$ and
\begin{eqnarray*}
M(t,r)=M(t,0)+\int_{0}^{r}Z(t,s)dW_s, \ \ \forall\ r\in [0,T].
\end{eqnarray*}Hence,
\begin{eqnarray}\label{eq25a}
M(t,r)=M(t,T)-\int_{r}^{T}Z(t,s)dW_s, \ \ \forall\ r\in [0,T].
\end{eqnarray}
%$M_{t}^{t}=M_{0}^{t}+\displaystyle\int_{0}^{t}Z(t,s)dW_s$.% and \ $M_{T}^{t}=M_{0}^{t}+\displaystyle\int_{0}^{T}Z(t,s)dW_s$.

\medskip
\noindent By definition,
$M(t,T)=\xi+\displaystyle\int_{0}^{T}f(t,s)ds+
\int_{0}^{T}g(t,s)\overleftarrow{dB_s}$ and
\begin{eqnarray*}
M(t,r)=N(t,r)+\int_{0}^{r}f(t,s)ds+
\int_{0}^{r}g(t,s)\overleftarrow{dB_s},
\end{eqnarray*} where $$N(t,r)= \E\left(\xi+\int_{r}^{T}f(t,s)ds+
\int_{r}^{T}g(t,s)\overleftarrow{dB_s}\ |\
\mathcal{F}_{r,0}\right).$$

\noindent Therefore
\begin{eqnarray}\label{eq25b}
N(t,r)=\xi+\int_{r}^{T}f(t,s)ds+
\int_{r}^{T}g(t,s)\overleftarrow{dB_s}-\int_{r}^{T}Z(t,s)dW_s.
\end{eqnarray}

Note that $\mathcal{F}_{r,0}=\mathcal{F}_{r}^{W} \vee
\mathcal{F}_{0,T}^{B}=\mathcal{F}_{r} \vee \mathcal{F}_{r}^{B}.$
Then

\begin{eqnarray*}
N(t,r)= \E\left(\theta(\xi,t,r,T)\ |\ \mathcal{F}_{r} \vee
\mathcal{F}_{r}^{B}\right),
\end{eqnarray*}

\noindent where
$\theta(\xi,t,r,T)=\xi+\displaystyle\int_{r}^{T}f(t,s)ds+
\int_{r}^{T}g(t,s)\overleftarrow{dB_s}$ is $\mathcal{F}_{T}^{W} \vee
\mathcal{F}_{r,T}^{B}-$measurable. Consequently,
$\mathcal{F}_{r}^{B}$ is independent of $\mathcal{F}_{r} \vee
\sigma(\theta(\xi,t,r,T))$ and
\begin{eqnarray*}
N(t,r)= \E\left(\theta(\xi,t,r,T)\ |\ \mathcal{F}_{r}\right).
\end{eqnarray*}

\medskip
\noindent Define
\begin{eqnarray}\label{eq25}
Y(t)=N(t,t)=\E\left(\xi+\int_{t}^{T}f(t,s)ds+
\int_{t}^{T}g(t,s)\overleftarrow{dB_s}\ |\ \mathcal{F}_{t}\right).
\end{eqnarray}
Obviously, $Y(.)$ is $\mathbb{F}-$adapted and satisfies the
following relation:
\begin{eqnarray*}
Y(t)=\xi+\int_{t}^{T}f(t,s)ds+
\int_{t}^{T}g(t,s)\overleftarrow{dB_s}-\int_{t}^{T}Z(t,s)dW_{s},\
t\in[0,T].
\end{eqnarray*}
%\medskip
%Now, let us consider the random function  $(t,s)\in \mathcal{D}\rightarrow Z(t,s)$.
%
%\medskip
%\noindent First, , for any  $t\in[0,T],$ $Z(t,s)$ is $\mathcal{F}_{s}^{W} \vee %\mathcal{F}_{t,T}^{B}-$measurable for any $s\in[0,T]$. Consequently, for $s\leq t$, %$Z(t,s)$ is $\mathcal{F}_{s}-$measurable.

\medskip
\noindent On the other hand, we know from above that, for any
$t\in[0,T],$ $Z(t,.)$ is $\mathbb{F}_{\cdot 0}-$adapted and
satisfies equation \eqref{eq25b}, so
\begin{eqnarray}\label{R}
\int_{r}^{T}Z(t,s)dW_s=\theta(\xi,t,r,T)-N(t,r), \ r\in [0,T].
\end{eqnarray}

\medskip
\noindent Since,  $\theta(\xi,t,r,T)$ (resp. $N(t,r)$) is
$\mathcal{F}_{T}^{W} \vee \mathcal{F}_{r,T}^{B}-$ (resp.
$\mathcal{F}_{r}^{W} \vee \mathcal{F}_{r,T}^{B}-$) measurable, it
follows that $\displaystyle\int_{r}^{T}Z(t,s)dW_s$ is
$\mathcal{F}_{T}^{W} \vee \mathcal{F}_{r,T}^{B}-$measurable, for any
$r\in [0,T]$.

\medskip
\noindent Hence, from Itô's martingale representation theorem,
$\{Z(t,s), r<s<T\}$ is  $\mathbb{F}_{\cdot r}-$adapted.
Consequently, $Z(t,s)$ is $\mathcal{F}_{s}^{W} \vee
\mathcal{F}_{r,T}^{B}$-measurable, for any  $r<s<T$. It follows that
$Z(t,s)$ is $\underset{r<s}{\bigwedge}(\mathcal{F}_{s}^{W} \vee
\mathcal{F}_{r,T}^{B})-$measurable. \\But,
$$\underset{r<s}{\bigwedge}(\mathcal{F}_{s}^{W} \vee
\mathcal{F}_{r,T}^{B})=\mathcal{F}_{s}^{W} \vee (
\underset{r<s}{\bigwedge}\mathcal{F}_{r,T}^{B})=\mathcal{F}_{s}^{W}
\vee\mathcal{F}_{s,T}^{B}.$$ Thus, $Z(t,s)$ is $\mathcal{F}_{s}^{W}
\vee \mathcal{F}_{s,T}^{B}-$measurable, for any $(t,s)\in
\mathcal{D}$.

\medskip
Now, let us prove that $(Y(.),Z(.,.))\in
\mathcal{H}^{2}(\mathcal{D})$.

\medskip
\noindent To this end, we use the relations \eqref{eq25} and
\eqref{R}. Then, we obtain:
\begin{eqnarray*}
\E\int_{0}^{T}|Y(t)|^2dt\leq3\E\left(T|\xi|^2+\int_{0}^{T}\int_{0}^{T}\left(T\left|f(t,s)\right|^2+
\left|g(t,s)\right|^2\right)dsdt\right).
\end{eqnarray*}
and
\begin{eqnarray*}
\E\int_{0}^{T}\int_{0}^{T}\left|Z(t,s)\right|^2dsdt\leq6\E\left(T|\xi|^2+\int_{0}^{T}\int_{0}^{T}\left(T\left|f(t,s)\right|^2+
\left|g(t,s)\right|^2\right)dsdt\right).
\end{eqnarray*}
Hence, it follows from conditions $(H_{1})$-$(H_{3})$, that
$(Y(.),Z(.,.))\in \mathcal{H}^{2}(\mathcal{D})$.

\medskip
\noindent Therefore, $(Y(.),Z(.,.))$ is an adapted solution to
\eqref{v2a}.

\medskip
For the uniqueness, let us suppose that $(Y'(.),Z'(.,.))\in
\mathcal{H}^{2}(\mathcal{D})$ is an other adapted solution. Then we
have, $\forall\  t \in [0,T]$
\begin{eqnarray}\label{0}
Y(t)-Y'(t)+\int_{t}^{T}\big[Z(t,s)-Z'(t,s)\big]dW_s=0.
\end{eqnarray}
Taking $\mathbb{E}[.|\mathcal{F}_t]$ in \eqref{0}, we get \
$Y(t)-Y'(t)=0$, $\forall t \in [0,T]$.

\medskip
\noindent On the other hand, still using \eqref{0}, we deduce
$$\mathbb{E}\int_{0}^{T}\int_{0}^{T}|Z(t,s)-Z'(t,s)|^2dsdt=\mathbb{E}\int_{0}^{T}|Y(t)-Y'(t)|^2dt=0.$$
\end{proof}
Our main result in this paper is the following Theorem.
\begin{theorem}\label{t2}
Let $(H_{1})$, $(H_{2})$ and $(H_{3})$ hold. Then the BDSVIE
\eqref{v2a} admits a unique adapted solution $(Y(.),Z(.,.))\in
\mathcal{H}^{2}(\mathcal{D})$.
\end{theorem}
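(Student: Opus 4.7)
The plan is to apply Banach's fixed-point theorem in the weighted Bochner space $\mathcal{M}^{2}(S,T,\mathbb{R}^{k})$ equipped with the norm $\|y\|_{a}^{2}:=\mathbb{E}\int_{S}^{T}e^{at}|y(t)|^{2}dt$, first on a subinterval $[S,T]$ with $T-S$ small, and then iteratively on consecutive slices to cover $[0,T]$. For the Picard step, given $y\in\mathcal{M}^{2}(S,T,\mathbb{R}^{k})$ and each parameter $t\in[S,T]$, I would consider the parameter-dependent BDSDE on $[t,T]$
\begin{eqnarray*}
\tilde{Y}(t,r) &=& \xi+\int_{r}^{T}f(t,s,y(s),\tilde{Z}(t,s))ds+\int_{r}^{T}g(t,s,y(s),\tilde{Z}(t,s))dB_{s}\\
&& -\int_{r}^{T}\tilde{Z}(t,s)dW_{s},\qquad r\in[t,T].
\end{eqnarray*}
Viewed in the unknowns $(\tilde{Y},\tilde{Z})$, the drivers $(s,\tilde{y},\tilde{z})\mapsto f(t,s,y(s),\tilde{z})$ and $(s,\tilde{y},\tilde{z})\mapsto g(t,s,y(s),\tilde{z})$ are independent of $\tilde{y}$ and inherit $(H_{1})$--$(H_{2})$ directly from $(H'_{1})$--$(H'_{2})$, so Theorem \ref{t1} yields a unique $(\tilde{Y}(t,\cdot),\tilde{Z}(t,\cdot))\in\mathcal{S}^{2}([t,T],\mathbb{R}^{k})\times\mathcal{M}^{2}(t,T,\mathbb{R}^{k\times d})$. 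Setting $Y(t):=\tilde{Y}(t,t)$ (which is $\mathcal{F}_{t}$-measurable) and $Z(t,s):=\tilde{Z}(t,s)$ defines the Picard map $\Phi:y\mapsto Y$.

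The core of the argument is the following contraction estimate. For inputs $y^{1},y^{2}$ with corresponding outputs $(Y^{i},Z^{i})$, I would apply the Pardoux--Peng It\^o formula to $e^{ar}|\tilde{Y}^{1}(t,r)-\tilde{Y}^{2}(t,r)|^{2}$ on $[t,T]$ and take expectation so that both the backward and forward martingale terms vanish. The cross term is then dominated via $2ab\leq\epsilon a^{2}+\epsilon^{-1}b^{2}$ combined with the Lipschitz bounds in $(H'_{1})$--$(H'_{2})$, after choosing $\epsilon>C/(1-\alpha)$ and $a>\epsilon$. Evaluating at $r=t$ gives
\begin{eqnarray*}
e^{at}\mathbb{E}|Y^{1}(t)-Y^{2}(t)|^{2}+K_{0}\mathbb{E}\int_{t}^{T}e^{as}\|Z^{1}(t,s)-Z^{2}(t,s)\|^{2}ds\leq K_{1}\mathbb{E}\int_{t}^{T}e^{as}|y^{1}(s)-y^{2}(s)|^{2}ds,
\end{eqnarray*}
with $K_{0}:=1-\alpha-C/\epsilon>0$ and $K_{1}:=C(1+\epsilon^{-1})$. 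Integrating in $t$ over $[S,T]$ and swapping the order of integration on the right-hand side bounds it by $K_{1}(T-S)\|y^{1}-y^{2}\|_{a}^{2}$, so that $\Phi$ becomes a strict contraction as soon as $T-S<1/K_{1}$. Banach's theorem then produces a unique fixed point, yielding a unique $(Y,Z)\in\mathcal{H}^{2}(\mathcal{D}_{S,T})$ solving the BDSVIE on $[S,T]$.

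To reach the full interval, I would partition $[0,T]$ as $0=t_{0}<\cdots<t_{n}=T$ with $\max_{k}(t_{k+1}-t_{k})<1/K_{1}$ and extend the solution inductively from $[t_{k+1},T]$ to $[t_{k},T]$: on the new slice $[t_{k},t_{k+1}]$, the already-constructed $Y$ on $[t_{k+1},T]$ is fed as fixed input into the BDSDE above, and only $y$ restricted to $[t_{k},t_{k+1}]$ is iterated; the analogous contraction estimate then involves only $\int_{t_{k}}^{t_{k+1}}e^{as}|\Delta y(s)|^{2}ds$ on the right-hand side, which after integrating in $t$ brings out the small factor $t_{k+1}-t_{k}$, still below the threshold. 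After $n$ steps the unique adapted solution is built on $[0,T]$. I expect the main technical obstacle to be the joint $(t,s,\omega)$-measurability of the constructed $Z$-field, since the underlying BDSDE is solved separately for each parameter $t$; this is standard but non-trivial, handled by first approximating $t$ by piecewise-constant functions (for which joint measurability is automatic) and passing to the $L^{2}$-limit through the stability estimate above. The remainder of the argument is essentially bookkeeping with the weighted norms and repeated use of Theorem \ref{t1}.
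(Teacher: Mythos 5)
Your proposal is correct, and its skeleton matches the paper's: solve a family of BDSDEs parameterized by $t$ via Theorem \ref{t1}, read the solution of the Volterra equation off the diagonal $Y(t)=\tilde Y(t,t)$, obtain a contraction in a weighted $e^{as}$-norm on a short interval, and propagate backwards by induction. The genuine difference is in the choice of Picard map. The paper freezes \emph{both} $y(s)$ and $z(t,s)$ in the coefficients, so that each frozen equation has drivers independent of the unknowns (this is exactly what Lemma \ref{l1} handles), and the contraction is run on the product space $\mathcal{H}^{2}(\mathcal{D}_{S,T})$; the resulting constant $\Lambda=\max\bigl((T-S)C(1+\tfrac{1}{\theta}),\ \tfrac{C}{\theta}+\alpha\bigr)$ is forced below $1$ by taking $\theta>C/(1-\alpha)$ \emph{and} $T-S$ small. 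You freeze only $y$ and keep the $\tilde z$-dependence inside the Pardoux--Peng BDSDE, so the fixed point lives in $\mathcal{M}^{2}(S,T,\mathbb{R}^{k})$ alone, the condition $\epsilon>C/(1-\alpha)$ is used only to absorb the $Z$-difference into the left-hand side (giving $K_{0}>0$), and the contraction constant $K_{1}(T-S)$ depends solely on the interval length. Your route leans harder on Theorem \ref{t1} (it needs the $z$-dependent case, which the paper's Lemma \ref{l1} deliberately avoids) but buys a smaller iteration space and a cleaner smallness condition. Two further points in your favour: your gluing step --- keeping the integrals up to $T$ and iterating only the restriction of $y$ to the new slice $[t_{k},t_{k+1}]$, with the already-known $Y$ on $[t_{k+1},T]$ fed in as data --- correctly accounts for the fact that $Z(t,s)$ with $t<t_{k+1}\leq s$ is a genuinely new unknown, whereas the paper's equation \eqref{eqf} truncates the integrals at $S$ and does not address how the tail $\int_{S}^{T}f(t,s,\cdot,\cdot)ds$ (whose first argument is $t$, not $S$) is recovered; and you explicitly flag the joint $(t,s,\omega)$-measurability of the field $Z$, which the paper passes over in silence. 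Both of these are real gaps in the published argument that your version handles more carefully.
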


\begin{proof}
For simplicity of notations, we write $Y$ for $Y(.)$ and $Z$ for
$Z(.,.)$, throughout the proof.

For any $(y,z)\in \mathcal{H}^{2}(\mathcal{D}_{S,T})$, we consider
the following BDSVIE: $t\in[S,T]$
\begin{eqnarray}\label{eq5}
Y(t)=\xi+\int_{t}^{T}f(t,s,y(s),z(t,s))ds+
\int_{t}^{T}g(t,s,y(s),z(t,s))\overleftarrow{dB_s}
-\int_{t}^{T}Z(t,s)dW_{s}.
\end{eqnarray}
Thus, by lemma \ref{l1}, Eq. \eqref{eq5} admits a unique adapted
solution $(Y,Z)\in \mathcal{H}^{2}(\mathcal{D}_{S,T})$ and
\begin{eqnarray*}
&&\mathbb{E}\int_{S}^{T}|Y(t)|^2dt+
\mathbb{E}\int_{S}^{T}\int_{S}^{T}|Z(t,s)|^2dsdt
\\
&\leq& 9(T-S)\mathbb{E}|\xi|^2+9\big[(T-S)\vee
1\big]\mathbb{E}\int_{S}^{T}\int_{S}^{T}\Big(|f(t,s,0,0)|^2+|g(t,s,0,0)|^2\Big)dsdt\\
&+&9C(T-S)\big[(T-S)+1\big]\mathbb{E}\int_{S}^{T}|y(t)|^2dt+
9\big[(T-S)C+\alpha\big]\mathbb{E}\int_{S}^{T}\int_{S}^{T}|z(t,s)|^2dsdt
\\
&\leq& 9(T-S)\mathbb{E}|\xi|^2+9\big[(T-S)\vee
1\big]\mathbb{E}\int_{S}^{T}\int_{S}^{T}\Big(|f(t,s,0,0)|^2+|g(t,s,0,0)|^2\Big)dsdt\\
&+&\Gamma_{S,T}\Big[\mathbb{E}\int_{S}^{T}|y(t)|^2dt+
\mathbb{E}\int_{S}^{T}\int_{S}^{T}|z(t,s)|^2dsdt\Big],
\end{eqnarray*} where $\Gamma_{S,T}=\max\left\{9C(T-S)\big[(T-S)+1\big];
9\big[(T-S)C+\alpha\big]\right\}$. \\ Hence,
\begin{eqnarray*}
&&||(Y,Z)||_{\mathcal{H}^{2}(\mathcal{D}_{S,T})}^{2}
\\ &\leq& 9(T-S)\mathbb{E}|\xi|^2+9\big[(T-S)\vee
1\big]\mathbb{E}\int_{S}^{T}\int_{S}^{T}\Big(|f(t,s,0,0)|^2+|g(t,s,0,0)|^2\Big)dsdt\\
&+&\Gamma_{S,T}||(y,z)||_{\mathcal{H}^{2}(\mathcal{D}_{S,T})}^{2}.
\end{eqnarray*}

\medskip
\noindent Let us consider the map
$\Theta:\mathcal{H}^{2}(\mathcal{D}_{S,T}) \rightarrow
\mathcal{H}^{2}(\mathcal{D}_{S,T})$ defined by
\begin{eqnarray}
\Theta(y,z)=(Y,Z), \ \ \ \ \forall\ (y,z)\in
\mathcal{H}^{2}(\mathcal{D}_{S,T}),
\end{eqnarray} where $(Y,Z) \in\mathcal{H}^{2}(\mathcal{D}_{S,T})$
is the adapted solution to BDSVIE \eqref{eq5}.

\medskip
\noindent The map $\Theta$, as defined, is a contraction when
$T-S>0$ is small.

\medskip
\noindent Indeed, let $(y',z')\in
\mathcal{H}^{2}(\mathcal{D}_{S,T})$ and $(Y',Z')=\Theta(y',z')$ the
corresponding solution to BDSVIE \eqref{eq5} on $[S,T]$.

\medskip
\noindent Let now define:

\medskip
\hspace{2.5cm}$ \bar{Y}= Y-Y', \ \  \bar{Z}= Z-Z' $ and $ \bar{y}=
y-y', \ \  \bar{z}= z-z'. $

\medskip
\noindent Then,
\begin{eqnarray}\label{4iq}
\bar{Y}_t=\int_{t}^{T}F(t,s,\bar{y}(s),\bar{z}(t,s))ds+
\int_{t}^{T}G(t,s,\bar{y}(s),\bar{z}(t,s))
\overleftarrow{dB_s}-\int_{t}^{T}\bar{z}(t,s)dW_{s},
\end{eqnarray}
where  $F$ and $G$ are defined by
$$F(t,s,u,v)= f(t,s,u+y'(t),v+z'(t,s))-f(t,s,y'(t),z'(t,s))$$
$$G(t,s,u,v)= g(t,s,u+y'(t),v+z'(t,s))-g(t,s,y'(t),z'(t,s)).$$
It is easy to check that $F$ and $G$ verify hypotheses $(H_1)$ and
$(H_2)$ with

\medskip
$F(t,s,0,0)=0$ and $g(t,s,0,0)=0$, for any $(t,s)\in \mathcal{D}.$

\medskip
\noindent Therefore, by lemma \ref{l1}, $(\bar{Y},\bar{Z})\in
\mathcal{M}^{2}(S,T,\mathbb{R}^{k})\times
\mathcal{M}^{2}(\mathcal{D}_{S,T},\mathbb{R}^{k \times d})$ and
\begin{eqnarray*}
\mathbb{E}\int_{S}^{T}|\bar{Y}(t)|^2dt+
\mathbb{E}\int_{S}^{T}\int_{S}^{T}|\bar{Z}(t,s)|^2dsdt \leq
\Gamma_{S,T}\Big[\mathbb{E}\int_{S}^{T}|\bar{y}(t)|^2dt+
\mathbb{E}\int_{S}^{T}\int_{S}^{T}|\bar{z}(t,s)|^2dsdt\Big].
\end{eqnarray*} Hence,
\begin{eqnarray*}
||(\bar{Y},\bar{Z})||_{\mathcal{H}^{2}(\mathcal{D}_{S,T})}^{2}
 \leq \Gamma_{S,T}||(\bar{y},\bar{z})||_{\mathcal{H}^{2}(\mathcal{D}_{S,T})}^{2}.
\end{eqnarray*}
Consequently,
\begin{eqnarray}
||\Theta(y,z)-\Theta(y',z')||_{\mathcal{H}^{2}(\mathcal{D}_{S,T})}^{2}\nonumber
\leq
\Gamma_{S,T}||(y,z)-(y',z')||_{\mathcal{H}^{2}(\mathcal{D}_{S,T})}^{2}\nonumber,
\end{eqnarray} for any $(y,z),\ (y',z')\in \mathcal{H}^{2}(\mathcal{D}_{S,T})$.
%\begin{eqnarray}\label{rpp}
%&&||\Theta(y,z)-\Theta(y',z')||_{\mathcal{H}^{2}(\mathcal{D}_{S,T})}^{2}\nonumber
%\\
%\vspace{2cm} &&\equiv
%||(Y,Z)-(Y',Z')||_{\mathcal{H}^{2}(\mathcal{D}_{S,T})}^{2}\nonumber\\&&\equiv
%\mathbb{E}\left\{\int_{S}^{T}|Y(t)-Y'(t)|^2dt+\int_{S}^{T}\int_{t}^{T}|Z(t,s)-Z'(t,s)|^2dsdt\right\}
%\\
%&\leq &
%\Gamma_{S,T}\mathbb{E}\left\{\int_{S}^{T}|y(s)-y'(s)|^2ds+
%\int_{S}^{T}\int_{t}^{T}|z(t,s)-z'(t,s)|^2dsdt\right\}\nonumber\\
%&\leq &
%\Gamma_{S,T}||(y,z)-(y',z')||_{\mathcal{H}^{2}(\mathcal{D}_{S,T})}^{2}\nonumber.
%\end{eqnarray}

\medskip
\noindent Thus, the map $\Theta:\mathcal{H}^{2}(\mathcal{D}_{S,T})
\rightarrow \mathcal{H}^{2}(\mathcal{D}_{S,T})$ is a contraction
when $S\in[0,T]$ is chosen such that $\Gamma_{S,T}<1$. Hence,
$\Theta$ admits a unique fixed point
$(Y,Z)\in\mathcal{H}^{2}(\mathcal{D}_{S,T})$ which is the unique
adapted solution of BDSVIE \eqref{v2a} on $[S,T]$.

\medskip
To end the proof, let $S'\in [0,S]$.\\ From above, we have the
existence of a unique adapted solution on $[S,T]$. Therefore, there
exists a unique $Y(S)$. \\ Now, for any $t\in[S',S]$, let us
consider the following equation with terminal condition $Y(S)$:
\begin{eqnarray}\label{eqf}
y(t)=Y(S)+\int_{t}^{S}f(t,s,y(s),z(t,s))ds+
\int_{t}^{S}g(t,s,y(s),z(t,s))dB_{s}-\int_{t}^{S}z(t,s)dW_{s}.\ \ \
\end{eqnarray}
Using the same procedure as above, we conclude that the equation
\eqref{eqf} admits a unique adapted solution
$(y,z)\in\mathcal{H}^{2}(\mathcal{D}_{S',S})$ on $[S',S]$.
Therefore, we can deduce by induction, the existence and uniqueness
of an adapted solution in $\mathcal{H}^{2}(\mathcal{D})$ to BDSVIE
\eqref{v2a} on $[0,T]$.
\end{proof}

\end{document}